\newcommand{\shrinkmargins}[1]{
  \addtolength{\textheight}{#1\topmargin}
  \addtolength{\textheight}{#1\topmargin}
  \addtolength{\textwidth}{#1\oddsidemargin}
  \addtolength{\textwidth}{#1\evensidemargin}
  \addtolength{\topmargin}{-#1\topmargin}
  \addtolength{\oddsidemargin}{-#1\oddsidemargin}
  \addtolength{\evensidemargin}{-#1\evensidemargin}
  }
\newtheorem{theorem}{Theorem}
\newtheorem{corollary}[theorem]{Corollary}
\newtheorem{proposition}[theorem]{Proposition}
{Claim}
\newtheorem*{definition}{Definition}
\theoremstyle{remark}
\newtheorem*{remark}{Remark}
\newtheorem*{example}{Example}
\numberwithin{theorem}{section} \numberwithin{equation}{section}
\begin{document}
\title[Bessenrodt--Ono Inequality]{Polynomization of the \\Bessenrodt--Ono inequality}

\author{Bernhard Heim} 
\author{Markus Neuhauser}
\author{Robert Tr{\"o}ger}
\address{Faculty of Science, German University of Technology in Oman, Muscat, Sultanate of Oman, \newline Faculty of Mathematics, Computer Science, and Natural Sciences, RWTH Aachen University,
52056 Aachen, Germany}
\email{bernhard.heim@gutech.edu.om}
\email{markus.neuhauser@gutech.edu.om}
\email{robert@silva-troeger.de}

\subjclass[2010] {Primary 05A17, 11P82; Secondary 05A20}
\keywords{Partition, polynomial, partition inequality}


\begin{abstract}
In this paper we investigate the generalization of
the Bessenrodt--Ono inequality by following Gian-Carlo Rota's advice in studying
problems in combinatorics and number theory in terms of roots of polynomials.
We consider the number of $k$-colored partitions of $n$ as special values of 
polynomials $P_n(x)$.  We prove for all real numbers $x >2 $ and $a,b \in \mathbb{N}$ with $a+b >2$ the inequality
\begin{equation*}
P_a(x) \, \cdot \, P_b(x) > P_{a+b}(x).
\end{equation*}
We show that $P_n(x) < P_{n+1}(x)$ for $x \geq 1$, which generalizes $p(n) < p(n+1)$, where $p(n)$ denotes the partition function.
Finally, we observe for small values, the opposite can be true since for example: $P_2(-3+ \sqrt{10}) = P_{3}(-3 + \sqrt{10})$.
\end{abstract}

\maketitle
\newpage
\section{Introduction and main results}
Let $p(n)$ be the number of partitions of $n$ (\cite{An98,On03,AE04}).
It is well-known that this arithmetic function increases strictly:
\begin{equation}
p(1) \, p(n) < p(n+1) \label{eq:Einstieg}
\end{equation}
for all $n \in \mathbb{N}$, since every partition of $n$ can be lifted to a partition of $n+1$.
Bessenrodt and Ono \cite{BO16} discovered that (\ref{eq:Einstieg}) is actually an exception in the context of
more general products $p(a)\,p(b)$. Recently, DeSalvo and Pak \cite{DP15} proved that the sequence $\{p(n)\}$ is log-concave for $n>25$.
\begin{theorem}[Bessenrodt--Ono 2016] \label{Ono} \ \\
Let $a,b$ be natural numbers. Let $a+b >9$, then
\begin{equation}
p(a) \, p(b) > p(a+b). \label{eq:BO}
\end{equation}
\end{theorem}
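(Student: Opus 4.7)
The plan is to control $p(n)$ by explicit upper and lower bounds coming from the Hardy--Ramanujan--Rademacher formula, reduce \eqref{eq:BO} to an exponential-beats-polynomial comparison that holds outside a finite window, and verify the remaining cases by direct computation. By the symmetry $p(a)p(b) = p(b)p(a)$, I may assume $2 \leq a \leq b$; note that $a \geq 2$ is in fact necessary, since $p(1)p(b) = p(b) < p(b+1)$ shows \eqref{eq:BO} fails whenever one of the arguments equals $1$.

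First I would invoke explicit bounds of Lehmer type,
\begin{equation*}
\frac{\sqrt{3}}{12 n}\left(1 - \frac{1}{\sqrt{n}}\right) e^{\pi \sqrt{2n/3}} \; < \; p(n) \; < \; \frac{\sqrt{3}}{12 n}\left(1 + \frac{1}{\sqrt{n}}\right) e^{\pi \sqrt{2n/3}},
\end{equation*}
valid for $n$ beyond a small explicit threshold. The key analytic ingredient is the strict subadditivity $\sqrt{a} + \sqrt{b} > \sqrt{a+b}$, which for $2 \leq a \leq b$ sharpens to
\begin{equation*}
\sqrt{a} + \sqrt{b} - \sqrt{a+b} \; \geq \; (2 - \sqrt{2})\sqrt{a},
\end{equation*}
since $1 + \sqrt{t} - \sqrt{1+t}$ is increasing in $t = b/a \geq 1$ with minimum $2 - \sqrt{2}$ at $t = 1$. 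Substituted into the quotient $p(a)p(b)/p(a+b)$, this gap yields an exponential factor of at least $\exp\!\bigl(\pi\sqrt{2/3}\,(2-\sqrt{2})\sqrt{a}\bigr)$, which easily dominates the polynomial ratio $(a+b)/(ab)$ and the slack factors $(1 \pm 1/\sqrt{n})$ once $a$ exceeds an explicit threshold $A$.

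For each remaining value $a \in \{2, 3, \dots, A\}$, the same argument, with $a$ fixed and $b$ varying, disposes of all sufficiently large $b$, leaving only a finite explicit list of pairs $(a, b)$ with $a + b > 9$. These are verified directly against a short table of values of $p(n)$. The main obstacle is calibrating the estimates tightly enough that the cutoff comes out to exactly $a + b > 9$ as stated: crude bounds would leave hundreds of residual cases and an inflated cutoff, so the proof genuinely requires Lehmer's effective inequalities (equivalently, the first two terms of Rademacher's expansion) rather than the bare Hardy--Ramanujan asymptotic. A secondary technical point is tracking the asymmetric roles of $a$ and $b$ in the lower bound on $\sqrt{a} + \sqrt{b} - \sqrt{a+b}$, so that the polynomial loss of order $1/(ab)$ is genuinely absorbed by an exponential gain that is only of order $\sqrt{a}$, the smaller of the two arguments.
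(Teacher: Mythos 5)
This statement is quoted from Bessenrodt--Ono \cite{BO16}; the paper gives no proof of its own, noting only that the original argument rests on Rademacher's formula and Lehmer's error estimates, and your sketch follows exactly that route (Lehmer-type two-sided bounds, strict subadditivity of $\sqrt{\cdot}$ reducing everything to an exponential-versus-polynomial comparison, then a finite check), so it matches the cited proof in approach. One calibration point: state the Lehmer bounds with the exponent $\frac{\pi}{6}\sqrt{24n-1}$ rather than $\pi\sqrt{2n/3}$, since replacing $\sqrt{24n-1}$ by $\sqrt{24n}$ enlarges the claimed lower bound, which is therefore not automatically still a valid lower bound for $p(n)$.
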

Bessenrodt and Ono provided proof, 
based on a theorem of Rademacher \cite{Ra37} and Lehmer \cite{Le39}.
They speculated at the end of their paper, that combinatorial proof could be possible.
Shortly after their paper was published, Alanazi, Gagola, and Munagi \cite{AGM17} found such a proof.
Chern, Fu, and Tang \cite{CFT18} generalized Bessenrodt and Ono's theorem to $k$-colored partitions $p_{-k}(n)$ of
$n$.
\begin{theorem}[Chern, Fu, Tang 2018] \label{Fu} \ \\
Let $a,b,k$ be natural numbers. Let $k>1$, then
\begin{equation} \label{k=2}
p_{-k}(a) \,  p_{-k}(b)    >     p_{-k}(a+b), 
\end{equation}
except for $(a,b,k) \in \left\{ (1,1,2), (1,2,2), (2,1,2), (1,3,2), (3,1,2), (1,1,3)\right\} $.
\end{theorem}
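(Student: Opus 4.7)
The plan is to separate the claim into an asymptotic regime and a finite computational check. Starting from the product
\[
\sum_{n\geq 0} p_{-k}(n)\,q^n \,=\, \prod_{m\geq 1}(1-q^m)^{-k},
\]
a Hardy--Ramanujan--Rademacher analysis (or equivalently Meinardus's theorem) yields a leading asymptotic of the shape
\[
p_{-k}(n) \,\sim\, \frac{C_k}{n^{(k+3)/4}}\,\exp\!\left(\pi\sqrt{2kn/3}\right) \qquad \text{as } n\to\infty.
\]
Since $\sqrt{a}+\sqrt{b}>\sqrt{a+b}$ for every $a,b\geq 1$, the exponential factor in $p_{-k}(a)\,p_{-k}(b)$ already beats that in $p_{-k}(a+b)$ by a positive, growing margin, while the polynomial prefactor $n^{-(k+3)/4}$ is eventually negligible in comparison.

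First I would make this asymptotic effective by supplying explicit constants (or by using an elementary Rademacher--Lehmer-style tail estimate on $\log p_{-k}$) so as to produce a threshold $N(k)$ beyond which \eqref{k=2} holds for all admissible pairs with $a+b\geq N(k)$. With sufficiently uniform control one can arrange $N(k)$ to decrease rapidly as $k$ grows, so that only $k=2$ and $k=3$ (with perhaps a small residual range for $k=4$) contribute to the finite problem. For the remaining triples the values $p_{-k}(n)$ can be tabulated directly from the Euler product; a case-by-case comparison then verifies the strict inequality \eqref{k=2} or isolates it as one of the six listed exceptions.

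The main obstacle is the \emph{uniform-in-$k$} effective version of the asymptotic: without sufficiently sharp error bounds the residual range can grow uncomfortably large, and one may be forced to handle $k=2$ and $k=3$ by hand with separate sharper estimates. An attractive alternative I would pursue in parallel is a purely combinatorial injective proof in the spirit of Alanazi--Gagola--Munagi's argument for Theorem \ref{Ono}: given a $k$-colored partition of $a+b$, construct an injection into pairs consisting of a $k$-colored partition of $a$ and one of $b$ that fails to be surjective except when $(a,b,k)$ lies in the exceptional list. Such an approach would bypass the analytic machinery entirely and explain combinatorially why the exceptions occur exactly at these small values.
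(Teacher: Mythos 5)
You should first be aware that the paper does not prove this statement at all: Theorem \ref{Fu} is imported verbatim from Chern, Fu, and Tang \cite{CFT18} and is used as a black box (it supplies the base point $x=2$ for the induction in Theorem \ref{Hauptsatz} and in Proposition \ref{monotonie}). So there is no proof in the paper to compare against; your proposal has to stand on its own as a proof of the cited result.

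As a strategy it is broadly the right one --- an effective Rademacher/Lehmer or Meinardus asymptotic plus a finite verification is indeed how results of this type are established, and your asymptotic $p_{-k}(n)\sim C_k n^{-(k+3)/4}\exp\bigl(\pi\sqrt{2kn/3}\bigr)$ is correct in form. But two concrete gaps remain. First, your central heuristic misfires exactly where it matters. The exponential margin is $\pi\sqrt{2k/3}\,\bigl(\sqrt{a}+\sqrt{b}-\sqrt{a+b}\bigr)$, and for fixed $b$ with $a\to\infty$ this does \emph{not} grow: it increases only to the bounded limit $\pi\sqrt{2kb/3}$, while the ratio of polynomial prefactors tends to the constant $C_k b^{-(k+3)/4}$, which is not ``eventually negligible.'' In that regime the ratio $p_{-k}(a)p_{-k}(b)/p_{-k}(a+b)$ tends to $p_{-k}(b)$, so the inequality survives only because $p_{-k}(b)\geq k\geq 2$ --- a point your argument must make explicitly, especially since all six exceptions have $\min(a,b)=1$ and live precisely in this regime. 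Second, the reduction to finitely many triples is the actual mathematical content and is only asserted. Since $a+b\geq 2$ always, ``$N(k)$ decreases rapidly'' would have to mean $N(k)\leq 2$ uniformly for $k$ large, i.e.\ a fully uniform-in-$k$ effective bound covering \emph{all} pairs; you acknowledge this obstacle but offer no mechanism for overcoming it. A workable substitute is to handle large $k$ by a separate argument (e.g.\ induction on $k$ via the convolution $p_{-(k+1)}(n)=\sum_{j} p(j)\,p_{-k}(n-j)$, or crude lower bounds such as $p_{-k}(a)\geq\binom{k+a-1}{a}$), reducing to $k\in\{2,3\}$ plus a finite table. Without one of these ingredients supplied explicitly, the proposal is an outline rather than a proof. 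The combinatorial injection you mention in parallel is attractive but is likewise only a wish at this stage.
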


Let $P_n(x)$ be the unique polynomial of degree $n$ satisfying $P_n(k)=p_{-k}(n)$ for all $k \in \mathbb{N}$ (see \cite{HLN19}).
In this paper we prove the following results:
\begin{theorem} \label{HNT Theorem}
Let $n \in \mathbb{N}$ and $x \in \mathbb{R}$ with $x \geq 1$. Then
\begin{equation}
P_n(x) < P_{n+1}(x) \text{ and } 1 \leq P_n'(x)< P_{n+1}'(x). \label{eq:comparison}
\end{equation}
\end{theorem}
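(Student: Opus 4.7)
The plan is to exploit two generating-function recurrences for $P_n(x)$ coming from the identity $\sum_{n \geq 0} P_n(x)\, q^n = \prod_{m \geq 1}(1 - q^m)^{-x}$ (interpreted as formal power series in $q$ with coefficients in $\mathbb{Q}[x]$), and then induct on $n$. Taking the logarithmic derivative in $x$, using the identity $\sum_{j \mid n} 1/j = \sigma(n)/n$ with $\sigma(n) = \sum_{d \mid n} d$, and comparing coefficients of $q^n$ yields
\begin{equation*}
P_n'(x) \;=\; \sum_{k=1}^{n} \frac{\sigma(k)}{k}\, P_{n-k}(x) \qquad (n \geq 1).
\end{equation*}
The analogous $q$-derivative gives the companion recurrence $n P_n(x) = x \sum_{k=1}^{n} \sigma(k)\, P_{n-k}(x)$, from which a short induction on $n$ shows that every coefficient of $P_n(x)$, expanded as a polynomial in $x$, is strictly positive. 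In particular $P_n(x), P_n'(x) \geq 0$ on $[0, \infty)$.

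Because each summand in the $x$-derivative recurrence is non-negative on $[0,\infty)$, isolating the $k = n$ term gives $P_n'(x) \geq \sigma(n)/n \geq 1$ for every $n \geq 1$ and every $x \geq 0$, which already establishes the left-hand inequality of \eqref{eq:comparison}. The remaining comparisons I would prove by simultaneous induction on $n$: assuming $P_m(x) < P_{m+1}(x)$ for all $1 \leq m < n$ on $[1, \infty)$, subtract the derivative recurrences to obtain
\begin{equation*}
P_{n+1}'(x) - P_n'(x) \;=\; \frac{\sigma(n+1)}{n+1} + \sum_{k=1}^{n} \frac{\sigma(k)}{k}\bigl(P_{n+1-k}(x) - P_{n-k}(x)\bigr).
\end{equation*}
For $1 \leq k \leq n-1$ the bracket is positive by the induction hypothesis applied at index $n - k \geq 1$; for $k = n$ it equals $x - 1 \geq 0$ on $[1, \infty)$; and the free term $\sigma(n+1)/(n+1) > 0$ makes the sum strictly positive. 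Integrating from $1$ to $x$ and using the base value $P_{n+1}(1) - P_n(1) = p(n+1) - p(n) > 0$ from \eqref{eq:Einstieg} then yields $P_{n+1}(x) > P_n(x)$ on $[1, \infty)$, closing the induction.

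The crux is discovering the clean $x$-derivative identity above; once available it expresses $P_{n+1}' - P_n'$ as a non-negative combination of the strictly smaller differences $P_{m+1} - P_m$ together with a strictly positive constant, so the induction is essentially mechanical. A direct coefficient-level approach is blocked because $P_{n+1}(x) - P_n(x)$ need not have non-negative coefficients in general, as already $P_3(x) - P_2(x) = x(x^2 + 6x - 1)/6$ illustrates; this is why the proof must use monotonicity arguments on $[1, \infty)$ anchored at $x = 1$ via the classical partition inequality \eqref{eq:Einstieg}, rather than a term-by-term comparison.
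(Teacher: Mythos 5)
Your proposal is correct and follows essentially the same route as the paper: both use the derivative identity $P_n'(x)=\sum_{k=1}^{n}\frac{\sigma(k)}{k}P_{n-k}(x)$ to show $\Delta_n'(x)>0$ by induction (with the $k=n$ term giving $x-1\geq 0$ and the extra $\sigma(n+1)/(n+1)$ term giving strictness), and both anchor the monotonicity at $x=1$ via $p(n+1)>p(n)$. If anything, your write-up is more complete than the paper's, since you explicitly justify $P_n'(x)\geq\sigma(n)/n\geq 1$ from the positivity of the coefficients, a point the paper leaves implicit.
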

\begin{remark} 
Let $n+1$ be a prime number then there exists $x_n \in (0,1)$, such that $$P_{n+1}(x_n)<P_n(x_n).$$
\end{remark}

Our main result is the following extension of the Bessenrodt--Ono type inequality. 
\begin{theorem}\label{Hauptsatz}
Let $a,b \in \mathbb{N}$, $a+b>2$, and $x >2$. Then
\begin{equation}
P_a(x) \, P_b(x)  >  P_{a+b}(x).  \label{conjecture}
\end{equation}
The case $x=2$ is true for $a+b>4$.
\end{theorem}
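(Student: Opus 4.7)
The plan is to analyze the polynomial $F(x) := P_a(x)P_b(x) - P_{a+b}(x)$ of degree $a+b$. Its leading coefficient is $(\binom{a+b}{a}-1)/(a+b)! > 0$, since the leading term of $P_n(x)$ is $x^n/n!$; thus $F(x) \to +\infty$ as $x \to \infty$. Theorem \ref{Fu} already gives $F(k) > 0$ at every integer $k \geq 3$ when $a+b \geq 3$, and $F(2) \geq 0$ in general (strict for $a+b \geq 5$, with equalities only at the four listed pairs when $a+b \in \{3,4\}$). What remains is to interpolate this integer positivity to all real $x > 2$ and to exclude spurious sign changes on the non-integer part of $(2,\infty)$.

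The key reformulation is to convert real positivity into coefficient positivity via the shift $y := x-2$. Setting $\widetilde{P}_n(y) := P_n(y+2)$, the convolution identity
$$P_n(x+y) = \sum_{k=0}^{n} P_k(x) P_{n-k}(y),$$
coming from $\prod_{m\geq 1}(1-q^m)^{-(x+y)} = \prod_{m\geq 1}(1-q^m)^{-x}\prod_{m\geq 1}(1-q^m)^{-y}$, specialized at $x=2$ yields $\widetilde{P}_n(y) = \sum_{k=0}^n p_{-2}(k)\,P_{n-k}(y)$. Since each $P_m(y)$ has non-negative rational coefficients (inductively, from $mP_m(y) = y\sum_j \sigma(j) P_{m-j}(y)$), so does $\widetilde{P}_n(y)$. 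Theorem \ref{Hauptsatz} then reduces to the stronger \emph{coefficient-wise} claim $\widetilde{P}_a(y)\widetilde{P}_b(y) - \widetilde{P}_{a+b}(y) \in \mathbb{Q}_{\geq 0}[y]$, with at least one strictly positive coefficient when $a+b > 2$.

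I would prove this by induction on $n = a+b$, taking without loss of generality $a \leq b$. For the base case $a = 1$, combining $(b+1)P_{b+1}(x) = x\sum_k \sigma(k)P_{b+1-k}(x)$ with the analogous recurrence for $bP_b(x)$ produces the identity
\begin{equation*}
xP_b(x) - P_{b+1}(x) = \frac{b(x-2)}{b+1}\,P_b(x) + \frac{x}{b+1}\sum_{j=1}^{b}\bigl(2\sigma(j) - \sigma(j+1)\bigr)P_{b-j}(x).
\end{equation*}
The first summand is manifestly non-negative on $[2,\infty)$ and vanishes at $x = 2$; the full left-hand side at $x=2$ is the Chern--Fu--Tang residue $F_{1,b}(2) \geq 0$, which is strictly positive for $b \geq 4$. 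After expanding in $y = x-2$, one checks that the mixed-sign remainder is absorbed by the $(x-2)$-factor of the first summand, using Theorem \ref{HNT Theorem} to control the growth of $\widetilde{P}_b$ against the partial weights $2\sigma(j) - \sigma(j+1)$. The inductive step reduces general $(a,b)$ with $a \leq b$, $a+b = n$, to the extremal split $(1, n-1)$ through a comparison argument of the form $P_a(x)P_b(x) \geq P_{a-1}(x)P_{b+1}(x)$.

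The main obstacle is twofold. First, the coefficients $2\sigma(j) - \sigma(j+1)$ are genuinely mixed-sign (already $2\sigma(1)-\sigma(2) = -1$), so there is no term-by-term positivity and one must carefully show that the $(x-2)$-factor absorbs all negative contributions after expansion around $x=2$. Second, the reduction to the extremal split $a = 1$ cannot rely on generic log-concavity of $\{P_n(x)\}_n$, which actually fails at small $n$ for $x=2$ (for instance $P_5(2)^2 = 1296 < 1300 = P_4(2)P_6(2)$); the comparison argument must therefore exploit the specific convolutive structure of the shifted polynomials $\widetilde{P}_n(y)$ for $y \geq 0$. Establishing this monotone-dominance is the technical heart of the proof.
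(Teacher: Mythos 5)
Your proposal is a genuinely different strategy from the paper's (the paper inducts on $a+b$, anchors the induction at $x=2$ via Chern--Fu--Tang, and shows $\frac{\mathrm{d}}{\mathrm{d}x}(P_aP_b)>P_{a+b}'$ using the derivative formula $P_n'=\sum_{k}\frac{\sigma(k)}{k}P_{n-k}$, the bound $\sigma(m)\leq m(1+\ln m)$, the Rademacher--Lehmer estimates, and a finite computer check), but as written it has two unfilled gaps, both of which you yourself flag as ``the technical heart'' without resolving them. First, the reduction of a general split $(a,b)$ to the extremal split $(1,n-1)$ via $P_a(x)P_b(x)\geq P_{a-1}(x)P_{b+1}(x)$ is false exactly where you need it: your own example $P_5(2)^2=1296<1300=P_4(2)P_6(2)$ shows the very first link of the chain breaks at $x=2$, hence also for $x$ slightly above $2$, and the proposed repair (``exploit the convolutive structure of $\widetilde{P}_n$'') cannot be a coefficient-wise statement either, since the constant coefficient of $\widetilde{P}_5^2-\widetilde{P}_4\widetilde{P}_6$ is that same negative number $-4$. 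No working substitute is given, so the inductive step does not exist yet. Note that the paper avoids this entirely: it never compares different splits of $n$, but instead compares $P_a'P_b+P_aP_b'$ with $P_{a+b}'$ term by term using the induction hypothesis on smaller $A+B$.

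Second, the base case $a=1$ (your identity $xP_b-P_{b+1}=\frac{b(x-2)}{b+1}P_b+\frac{x}{b+1}\sum_j(2\sigma(j)-\sigma(j+1))P_{b-j}$ is correct) is left at ``one checks that the mixed-sign remainder is absorbed by the $(x-2)$-factor.'' This is precisely the hard part, and it is harder than the phrase suggests: $2\sigma(j)-\sigma(j+1)$ is negative not only at $j=1$ but infinitely often (e.g. $j=11$ gives $24-28=-4$, and similar failures recur whenever $j+1$ is highly composite), so the negative contributions do not come from a single controllable term, and at $x=2$ the absorbing term $\frac{b(x-2)}{b+1}P_b$ vanishes identically, so the constant term of the expansion in $y=x-2$ must be handled by a completely separate argument (this is where the paper is forced to invoke the Lehmer-type bound (5.2) together with an explicit verification for $n\leq 86$). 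Until you supply a quantitative lower bound on $P_b$ near $x=2$ that dominates $\sum_{j:\,2\sigma(j)<\sigma(j+1)}(\sigma(j+1)-2\sigma(j))P_{b-j}(x)$, the base case is not established. The coefficient-wise strengthening $\widetilde{P}_a\widetilde{P}_b-\widetilde{P}_{a+b}\in\mathbb{Q}_{\geq0}[y]$ is an attractive and (as far as small cases show) plausible conjecture, but proving it is strictly harder than the theorem and nothing in the proposal establishes it.
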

The theorem is proven by induction, using a special formula for the derivative of $P_n(x)$, 
the inequality (\ref{k=2}) for $k=2$ is proven by Chern, Fu, and Tang and
Theorem \ref{HNT Theorem}. This gives a precise answer to a conjecture stated in \cite{HN19}.

Bessenrodt--Ono type inequalities also appeared in work by Beckwith and Bessenrodt \cite{BB16} on $k$-regular partitions
and Hou and Jagadeesan \cite{HJ18} to the numbers of partitions with ranks in a given residue class modulo $3$.
Dawsey and Masri \cite{DM19} obtained new results for Andrews {\it{spt}}-function.
It is very likely that some of these recent results can be extended to an inequality of certain polynomials.
\section{Partitions and polynomials}
A partition $\lambda$ of a positive integer $n$ is any non-increasing sequence $\lambda_1,\lambda_2, \ldots, \lambda_d$ of positive integers
whose sum is $n$. The $\lambda_i$ denote the parts of the partition. The number of partitions of $n$ is denoted by $p(n)$
(see \cite{An98, On03}).
\begin{example}
The partitions of $n=4$ and $n=5$ are
\begin{eqnarray*}
4 &=& 3+1=2+2=2+1+1=1+1+1+1\\
5 &=& 4+1=3+2=3+1+1=2+2+1=2+1+1+1=1+1+1+1+1.
\end{eqnarray*}
Hence $p(4)=5$ and $p(5)=7$. Note that $p(200)$ is already equal to $3972999029388$.
\end{example}
A partition is called a $k$-colored partition of $n$ if each part can appear in $k$ colors. 
Let $p_{-k}(n)$ denote the number of $k$-colored partitions of $n$ (see \cite{CFT18}, introduction).
Note that $p_{-1}(n)= p(n)$ and $p_{-k}(n) < p_{-(k+1)}(n)$. For example
$p_{-2}(4)= 20$ and $p_{-2}(5) =36$.
The generating function of $p_{-k}(n)$ is given by
\begin{equation}
\sum_{n=0}^{\infty} p_{-k}(n) \, q^n = \frac{1}{ \prod_{n=1}^{\infty} \left( 1 - q^n \right)^k } = \frac{1}{(q;q)_{\infty}^k}
\qquad (k \in \mathbb{N}).
\end{equation}
Here $(a;q)_{\infty}= \prod_{n=0}^{\infty} (1 - a \, q^n)$ (standard notion).

\begin{definition}
We define recursively a family of polynomials $P_n(x)$. Let $P_0(x):=1$ and
\begin{equation}
P_n(x) := \frac{x}{n} \sum_{k=1}^{n} \sigma(k) \, P_{n-k}(x).
\end{equation}
Here $\sigma(n) := \sum_{d \mid n} d$ denotes the sum of divisors of $n$.
\end{definition}
Then it is known that $P_n(k) = p_{-k}(n)$ for $k \in \mathbb{N}$ ( see \cite{HLN19}). Let $p_{-k}(0):=1$.
If we put $q:= e^{2 \pi i \tau}$ with
$\tau$ in the upper complex half-space, then
\begin{equation*}
\sum_{n=0}^{\infty} P_n(z) \,\, q^n = \prod_{n=1}^{\infty} \left( 1 - q^n \right)^{-z} \qquad (z \in \mathbb{C}). 
\end{equation*}
We have $P_0(x)=1$, $P_1(x) = x$, $P_2(x) = x/2 \, (x+3)$, $P_3(x) = x/6 \, (x^2 + 9x +8)$.
\section{Basic properties of $P_n(x)$ and the proof of Theorem \ref{HNT Theorem}}
In this section we study the difference function
\begin{equation}
\Delta_n(x):= P_{n+1}(x) - P_n(x)
\end{equation}
to prove Theorem \ref{HNT Theorem}.
We first observe that
\begin{equation*}
\lim_{x \to \infty} \Delta_n(x) = + \infty.
\end{equation*}
This is true since $P_n(x)$ is a polynomial of degree $n$ of leading coefficient $1/n!$.
We have $P_n(x) = x/n!  \, \cdot \, \widetilde{P}_n(x)$, 
where $\widetilde{P}_n(x)$ is a normalized polynomial of degree $(n-1)$ with
positive integer coefficients. Hence, we also deduce that $\Delta_n(0)=0$. 
We are especially interested in the non-negative largest real root of $\Delta_n(x)$.
The real roots of $\Delta_1(x)$ are $\{-1,0\}$ 
and the real roots of $ \Delta_2(x)$ are $$\{-3-\sqrt{10}, \,0, \,  -3+ \sqrt{10}  \}.$$          
We can see already that $\Delta_n(x)$ is not always positive, here $ \Delta_2 ( x) <0$ for $$ 0 < x < -3+ \sqrt{10}.$$
\begin{wrapfigure}{r}{0.5\textwidth}
  \centering
  \vspace{-\baselineskip}
  \includegraphics[width=0.52\textwidth]{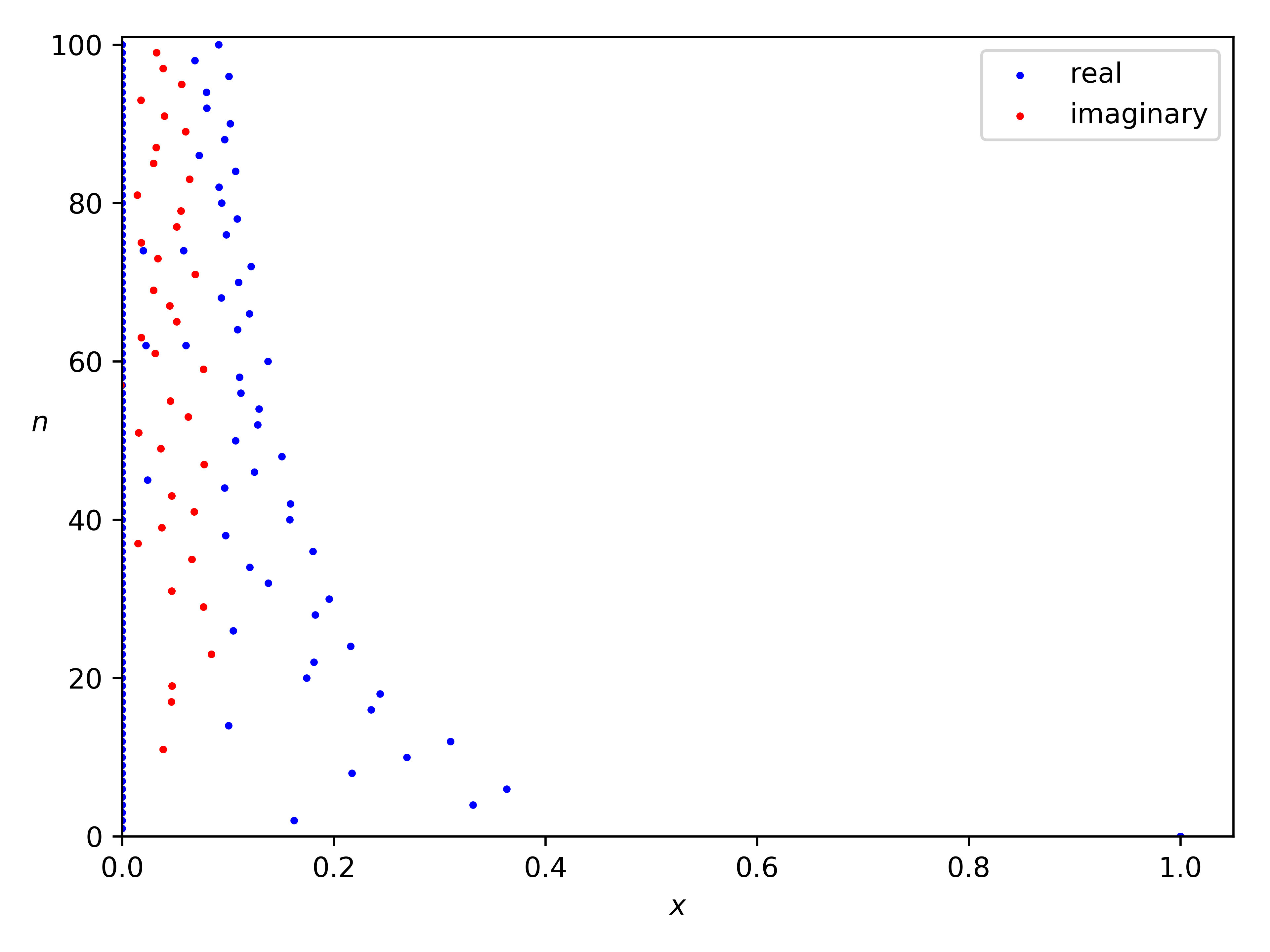}
  \caption{Roots of $\Delta_{n}(x)$ with positive real part---real part displayed.}
\end{wrapfigure}


We prove the first part of Theorem \ref{HNT Theorem} by induction.
Claim:
$P_{n+1}(x) > P_n(x)$ for all $n \in \mathbb{N}$ and $x \geq 1$.
Let $n=1$. We have $\Delta_1(x)> 0$ for all $x>0$.
Suppose that $\Delta_m(x)>0$ is true for all real numbers $x \geq 1$ and integers $1 \leq m \leq n-1$.
\newline
\newline
\newline
\newline
\newline
\newline
\newline
There is a useful formula for the derivatives 
$P_n'(x)$ for all $n \in \mathbb{N}_0$ \cite{HN18}:
\vspace{1.5cm}
\begin{equation}
P_n'(x) = \sum_{k=1}^n \frac{\sigma(k)}{k} \, P_{n-k}(x).
\end{equation}
For $P_{n+1}'(x)$ we obtain the strict lower bound
\begin{equation}
\sum_{k=1}^n \frac{\sigma(k)}{k} \, P_{n+1-k}(x) \geq \sum_{k=1}^n \frac{\sigma(k)}{k} \, P_{n-k}(x). 
\end{equation}
Hence $P_{n+1}'(x) > P_{n}'(x)$. Property (\ref{eq:Einstieg}) for the partition function provides
\begin{equation}
P_{n+1}(1) = p(n+1)> p(n) = P_n (1).
\end{equation}
As a corollary we obtain that $\Delta_n'(x) >0$.

We finally prove the remark in the introduction. Since $\Delta_n(0)=0$ and 
\begin{equation*}
\lim_{x \to \infty} \Delta_n(x) = + \infty
\end{equation*}
it is sufficient to show that $\Delta_n'(0) <0$ for $n+1$ a prime.
We have 
\begin{equation}
\Delta_n'(0) = \frac{\sigma(n+1)}{n+1} - \frac{\sigma(n)}{n} = \frac{n+2}{n+1} - \frac{\sigma(n)}{n} <0.
\end{equation}
Here $\sigma(n) > n+1$.
\section{Bessenrodt--Ono type inequality (BO)}
Let $a,b \in \mathbb{N}$. The Bessenrodt--Ono inequality $p(a) \, p(b) > p(a+b)$ 
is always satisfied for all $a,b \geq 2$ and $a+b >9$. Since the inequality is symmetric in $a$ and $b$ we assume $a \geq b$.
It was also shown \cite{BO16} that there is equality for $(a,b) \in \left\{ (6,2), (7,2), (4,3)\right\} $. 
The inequality fails completely for $b=1$ and $$(a,b) \in \left\{ (2,2),(3,2), (4,2), (5,2), (3,3),(5,3)\right\},$$ 
while it is true for the remaining cases $(a,b)\in \left\{ (4,4),(5,4)\right\} $.

The BO for $2$-colored partitions is true for all $a,b \in \mathbb{N}$ except $(a,b)=(1,1)$, 
where $p_{-2} (1) \, p_{-2}(1) < p_{-2}(2)$. Let $a \geq b$.
Then we have equality $(a,b) \in \{ (2,1),(3,1)\}$. 
The BO for $3$-colored partitions holds for all $a,b \in \mathbb{N}$ except for $(a,b)=(1,1)$, where we have equality.
If $k \geq 4$, then BO is fulfilled for all $a,b \in \mathbb{N}$ (see \cite{CFT18}).

Let $P_{a,b}(x):= P_a(x) \, P_b(x) - P_{a+b}(x)$. Then $P_{a,b}(0)= 0$, $P_{a,b}'(0) = - \sigma(a+b)/(a+b)$ and
\begin{equation*}
\lim_{ x \rightarrow \infty} P_{a,b}(x) = \infty.
\end{equation*}
In contrast to $\Delta_n(x)$, the polynomials $P_{a,b}(x)$ appear to have only one root $x_{a,b}$ with a positive real part.
The following table records these roots for $1 \leq a,b \leq 10$.

\vspace{0.5cm}

\begin{tabular}{|c||cccccccccc|}
\hline
$x_{a,b}$&     1&     2&     3&     4&     5&     6&     7&     8&     9&    10 \\
\hline
\hline
    1 &     3.00 &     2.00 &     2.00 &     1.69 &     1.74 &     1.57 &     1.59 &     1.50 &     1.51 &     1.45 \\
    2 &     2.00 &     1.40 &     1.25 &     1.13 &     1.09 &     1.00 &     1.00 & \textbf{    0.95} & \textbf{    0.92} & \textbf{    0.91} \\
    3 &     2.00 &     1.25 &     1.24 &     1.00 &     1.05 & \textbf{    0.90} & \textbf{    0.94}& \textcolor{gray}{    0.85}& \textcolor{gray}{    0.87}& \textcolor{gray}{    0.81} \\
    4 &     1.69 &     1.13 &     1.00 & \textbf{    0.87} & \textbf{    0.86}& \textcolor{gray}{    0.76}& \textcolor{gray}{    0.76}& \textcolor{gray}{    0.72}& \textcolor{gray}{    0.69}& \textcolor{gray}{    0.67} \\
    5 &     1.74 &     1.09 &     1.05 & \textbf{    0.86}& \textcolor{gray}{    0.88}& \textcolor{gray}{    0.75}& \textcolor{gray}{    0.79}& \textcolor{gray}{    0.70}& \textcolor{gray}{    0.71}& \textcolor{gray}{    0.67} \\
    6 &     1.57 &     1.00 & \textbf{    0.90}& \textcolor{gray}{    0.76}& \textcolor{gray}{    0.75}& \textcolor{gray}{    0.66}& \textcolor{gray}{    0.66}& \textcolor{gray}{    0.60}& \textcolor{gray}{    0.60}& \textcolor{gray}{    0.57} \\
    7 &     1.59 &     1.00 & \textbf{    0.94}& \textcolor{gray}{    0.76}& \textcolor{gray}{    0.79}& \textcolor{gray}{    0.66}& \textcolor{gray}{    0.69}& \textcolor{gray}{    0.62}& \textcolor{gray}{    0.63}& \textcolor{gray}{    0.58} \\
    8 &     1.50 & \textbf{    0.95}& \textcolor{gray}{    0.85}& \textcolor{gray}{    0.72}& \textcolor{gray}{    0.70}& \textcolor{gray}{    0.60}& \textcolor{gray}{    0.62}& \textcolor{gray}{    0.56}& \textcolor{gray}{    0.55}& \textcolor{gray}{    0.53} \\
    9 &     1.51 & \textbf{    0.92}& \textcolor{gray}{    0.87}& \textcolor{gray}{    0.69}& \textcolor{gray}{    0.71}& \textcolor{gray}{    0.60}& \textcolor{gray}{    0.63}& \textcolor{gray}{    0.55}& \textcolor{gray}{    0.56}& \textcolor{gray}{    0.52} \\
   10 &     1.45 & \textbf{    0.91}& \textcolor{gray}{    0.81}& \textcolor{gray}{    0.67}& \textcolor{gray}{    0.67}& \textcolor{gray}{    0.57}& \textcolor{gray}{    0.58}& \textcolor{gray}{    0.53}& \textcolor{gray}{    0.52}& \textcolor{gray}{    0.49} \\
\hline
\end{tabular}
\ \newline \
\captionof{table}{Positive real roots of $P_{a,b}(x)$}
\ \newline
\newline
\vspace{0.5cm}
  \begin{wrapfigure}{r}{0.5\textwidth}
    \centering
    \vspace{-\baselineskip}
    \includegraphics[width=0.48\textwidth]{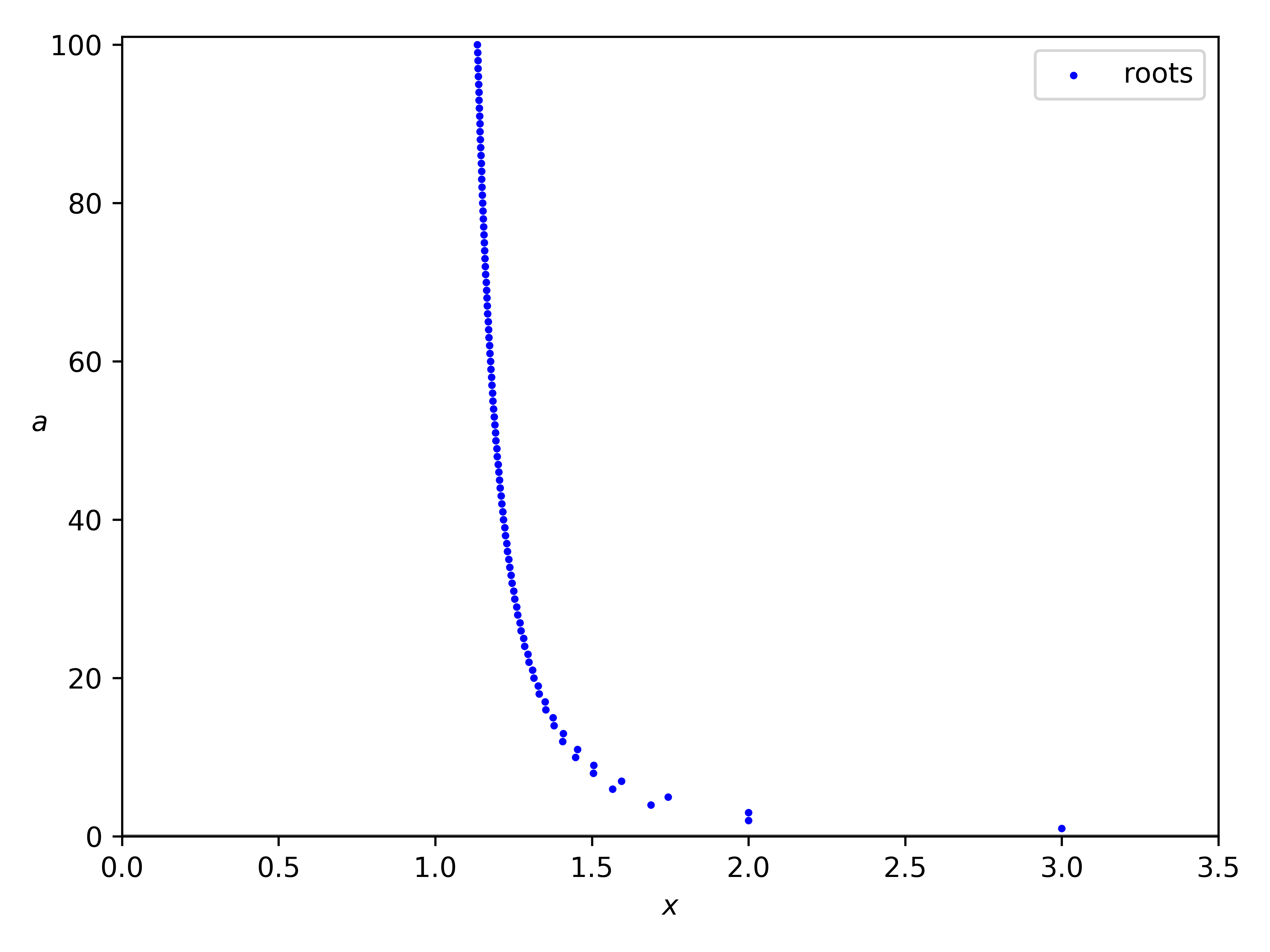}
    \caption{Positive real roots of $P_{a,1}(x)$.}
  \end{wrapfigure}
 \ \newline
The root distribution in the table explains all exceptions which appear in the papers \cite{BO16} and \cite{CFT18}.
In Figure 2 we have displayed the single positive root $x_{a,1}$ of $P_{a,1}(x)$ for $1 \leq a \leq 100$. 
It seems that, in general, $P_{a,1}(x)$ has exactly one positive real root (and no non-real roots) and that the limit exists and is equal to $1$.
\section{Special Case}
Let $\mu(n):= \frac{\pi}{6} \sqrt{24n-1}$. Rademacher \cite{Ra37} proved the formula
\begin{equation*}
p(n) = \frac{\sqrt{12}}{24n-1}  \sum_{k=1}^{N} A_k^{*}(n)  \, 
\left( \left(1-\frac{k}{\mu}\right)  e^{\mu /k } + \left(1+ \frac{k}{\mu}\right)  e^{- \mu /k } \right) + R_2(n,N).
\end{equation*}
Here $A_k^{*}(n) = \frac{1}{\sqrt{k}} A_k(n)$ are real numbers, where $A_k(n)$ is a complicated sum of $24k$th roots of unity.
Lehmer (\cite{Le37, Le38}, \cite{Le39}, introduction) obtained the estimate
\begin{equation}
\vert R_2(n,N) \vert < \frac{\pi^2 N^{-2/3}}{\sqrt{3}} \left( \left(\frac{N}{\mu}\right)^{3} \, \text{sinh}\left(\frac{\mu}{N}\right) + \frac{1}{6} - \left(\frac{N}{\mu}\right)^{2} \right).
\end{equation}
for all $n, N \in \mathbb{N}$.
DeSalvo and Pak \cite{DP15} recently utilized the case $N=2$ and proved that $p(n)$ is log-concave for all $n>25$.
They proved two of Chen's conjectures. For our purpose the case $N=1$, which was studied by
Bessenrodt and Ono \cite{BO16}, is more convenient, 
They obtained:
\begin{equation}
\label{BO2}
\frac{\sqrt{3}}{12m} \left( 1 - \frac{1}{\sqrt{m}}\right) \mathrm{e}^{\frac{\pi }{6}\sqrt{ 24m-1}} < P_{m}\left( 1\right) < 
\frac{\sqrt{3}}{12m} \left( 1 + \frac{1}{\sqrt{m}}\right) \mathrm{e}^{\frac{\pi }{6}\sqrt{24m-1}}. 
\end{equation}
In the following we utilize the well-known upper bound:
$$\sigma \left( m\right) \leq  m \left( 1+\ln \left( m\right) \right).$$
\begin{proposition}
\label{monotonie}
Let $n \in \mathbb{N}$ and $x \in \mathbb{R}$. The inequality
\begin{equation}
x \,  P_{n}\left( x\right) >P_{n+1}\left( x\right) 
\end{equation}
holds for all $x>2$ and $n\geq 2$. In the case $n=1$ it holds for $x>3$.
\end{proposition}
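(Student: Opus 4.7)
The plan is induction on $n$ with explicit control of the derivative $G_n'(x):=\frac{d}{dx}\bigl(xP_n(x)-P_{n+1}(x)\bigr)$. The small cases are handled by direct factorization from the recursion:
\begin{equation*}
xP_1(x)-P_2(x)=\tfrac{x(x-3)}{2},\ \ xP_2(x)-P_3(x)=\tfrac{x(x-2)(x+2)}{3},\ \ xP_3(x)-P_4(x)=\tfrac{x(x-2)(x+1)(x+7)}{8},
\end{equation*}
which establish the inequality for $n=1$ on $x>3$ and for $n\in\{2,3\}$ on $x>2$, and as a byproduct record the equalities $G_n(2)=0$ for $n\in\{2,3\}$.

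For the inductive step ($n\geq 4$) set $G_m(x):=xP_m(x)-P_{m+1}(x)$ and assume $G_m(x)>0$ for every $2\leq m\leq n-1$ and $x>2$. Differentiating and substituting the formula $P_m'(x)=\sum_{k=1}^{m}\frac{\sigma(k)}{k}P_{m-k}(x)$ into $xP_n'(x)-P_{n+1}'(x)$ produces
\begin{equation*}
G_n'(x)=P_n(x)-\frac{\sigma(n+1)}{n+1}+\sum_{k=1}^{n}\frac{\sigma(k)}{k}G_{n-k}(x).
\end{equation*}
In the sum, the contributions with $n-k\geq 2$ are positive by the inductive hypothesis, $G_0(x)=0$, and $G_1(x)=x(x-3)/2\geq -1$ for $x\geq 2$. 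Discarding the positive induction terms and applying the bound $\sigma(m)/m\leq 1+\ln m$ (the estimate displayed just before the proposition) at $m=n-1$ and $m=n+1$ leaves
\begin{equation*}
G_n'(x)>P_n(x)-2-\ln\bigl(n^2-1\bigr),\qquad x>2.
\end{equation*}

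To finish, one must bound $P_n$ from below. Since $P_n$ has non-negative coefficients, $P_n(x)\geq P_n(2)=p_{-2}(n)\geq 2p(n)$ for every $x\geq 2$, and the Bessenrodt--Ono lower bound on $p(n)$, supplemented by a direct check for the few small indices, yields $2p(n)>2+\ln(n^2-1)$ for all $n\geq 4$. Consequently $G_n'(x)>0$ on $(2,\infty)$ and $G_n$ is strictly increasing there; coupled with $G_n(2)\geq 0$ --- which for $n\geq 4$ is the Chern--Fu--Tang inequality applied to $(a,b)=(1,n)$, lying outside their exceptional list --- this gives $G_n(x)>G_n(2)\geq 0$ for every $x>2$ and closes the induction. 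The main obstacle is precisely this quantitative step: absorbing the single negative term originating from $G_1$ in the induction expansion into the exponential growth of $p(n)$, a comparison of logarithmic loss against exponential gain.
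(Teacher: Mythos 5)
Your proof is correct and follows essentially the same route as the paper: induction on $n$, anchored at $x=2$ by the Chern--Fu--Tang inequality, with positivity of the derivative of $xP_n(x)-P_{n+1}(x)$ extracted from the formula $P_m'(x)=\sum_{k=1}^{m}\frac{\sigma(k)}{k}P_{m-k}(x)$ and the bound $\sigma(m)\leq m(1+\ln m)$, the only negative contribution coming from the $G_1$ term. The one notable difference is your closing estimate $P_n(x)\geq p_{-2}(n)\geq 2p(n)>2+\ln(n^2-1)$, which is cleaner than the paper's requirement $P_n(2)\geq\frac{13}{4}(1+\ln(2n))$ and avoids both the Rademacher--Lehmer bound and the numerical check up to $n=86$.
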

\begin{proof}
The case $n=1$ is easy to see, since $P_{1,1}(x) = x/2 \, (x-3)$.
We prove the proposition by induction on $n$.
Let $n=2$. Then $P_{2,1}(x) = x/3 \, (x^2-4)>0$ for $x >2$.
If $n\geq 3$ and that $P_{m,1}(x) >0$ for $2 \leq m \leq n-1$ and $x>2$.

It is sufficient to prove that $\frac{\mathrm{d}}{\mathrm{d}x} P_{n,1}(x) > 0$, since 
we already know that $P_{n,1}(2) \geq 0$ (\cite{CFT18}, see introduction, Theorem 1.2).
The derivative of $P_{n,1}(x)$ is equal to
\begin{eqnarray*}
& & P_{n}\left( x\right) +\sum _{k=1}^{n}\frac{\sigma \left( k\right) }{k}xP_{n-k}\left( x\right) -\sum _{k=1}^{n+1}\frac{\sigma \left( k\right) }{k}P_{n+1-k}\left( x\right) \\
&>&P_{n}\left( x\right) +\frac{\sigma \left( n-1\right) }{n-1}\left( \left( x-3\right) x/2\right) -\frac{\sigma \left( n+1\right)  }{n+1} \\
&\geq &P_{n}\left( x\right) -\left( 1+\ln \left( n-1\right) \right) \frac{9}{4}-\left( 1+\ln \left( n+1\right) \right) \\
&\geq &P_{n}\left( x\right) -\frac{13}{4}\left( 1+\ln \left( 2n\right) \right) .
\end{eqnarray*}
We recall that $P_n(x)$ has non-negative coefficients. It is therefore sufficient
to show that
\begin{equation}
P_{n}\left( 2\right) \geq \frac{13}{4}\left( 1+\ln \left( 2n\right) \right) 
\label{eq:n1}
\end{equation}
for $n\geq 1$. Since $P_n(2)> P_n(1)$ we 
can now use (\ref{BO2}). 
\newpage
Suppose $n\geq 81$ then
\begin{eqnarray*}
P_{n}\left( 1\right) &>&\frac{\sqrt{3}}{12n}\left( 1-\frac{1}{\sqrt{n}}\right) \mathrm{e}^{\frac{\pi }{6}\sqrt{24n-1}} \\
&>&\frac{\sqrt{3}}{4!12n} \left( 1 - \frac{1}{\sqrt{n}}\right) \left( \frac{\pi}{6} \sqrt{24n-1}\right) ^{4} \\
&=&\frac{\sqrt{3}}{4!12}\frac{24n-1}{n}\left( 1-\frac{1}{\sqrt{n}}\right) \left( \frac{\pi }{6}\right) ^{4}\left( 24n-1\right) \\
&>&\frac{23\sqrt{3}}{4!12}\frac{8}{9}\left( \frac{\pi }{6}\right) ^{4}\left( 24n-1\right) =f\left( n\right) .
\end{eqnarray*}
Now
$  13/14  \left( 1+\ln \left( 2n\right)\right)  \leq     13/14      \left( \ln \left( 162\right) +\frac{n}{81} \right)           \leq f\left( n\right) $
if  and  only if
\[
n\geq 87>\left( \frac{23\sqrt{3}}{4!12}\frac{8}{9}\left( \frac{ \pi }{6}\right) ^{4}+\frac{13}{4}\left( \ln \left( 162\right) +\frac{1}{81}\right) \right) /\left( \frac{23\sqrt{3}}{12}\frac{8}{9}\left( \frac{\pi }{6}\right) ^{4}-\frac{13}{324}\right) .
\]
Thus for $n\geq 87$ holds
$P_{n}\left( x\right) >P_{n}\left( 1\right) >
13/14 \left(\ln \left( 162\right) +\frac{n}{81} \right)         \geq 13/14 \left( 1+\ln \left( 2n\right) \right)$.
It remains to check
$P_{n}\left( 2\right) >\frac{13}{4}\left( 1+\ln \left( 2n\right) \right) $
for $1\leq n\leq 86$. In this case we have
$\left( 1+\ln \left( 2n\right) \right) <7$.
We have
\[
\begin{array}{|c||c|c|c|c|c|}
\hline
n & 1 & 2 & 3 & 4 & 5 \\ \hline
P_{n}\left( 2\right) & 2 & 5 & 10 & 20 & 36 \\ \hline
\end{array}
\]
and since $P_{n}\left( 2\right) $ increases monotonously in $n$ by
Theorem \ref{monotonie} we also have
$\frac{13}{4}\left( 1+\ln \left( 2n\right) \right)  <36=P_{5}\left( 2\right) \leq P_{n}\left( 2\right) $
for $5\leq n\leq 86$.
\end{proof}
\begin{remark}
A sharper estimation of the left
hand side of (\ref{BO2}) could
show that it
is already larger than
$\frac{13}{4}\left( 1+\ln \left( 2n\right) \right) $
for $n\geq 8$.
\end{remark}

We deduce from the proof of the
proposition the following property.

\begin{corollary}
\label{hilfsresultat}Let $n \geq 2$ and $x>2$. Then 
\begin{equation}
P_n(x) - \left(1+\ln \left( 2n\right)\right) >0.
\label{eq:hilfsresultat}
\end{equation}
\end{corollary}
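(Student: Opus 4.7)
The plan is to derive this corollary directly from the sharper estimate
\[
P_n(2) \;\geq\; \tfrac{13}{4}\bigl(1 + \ln(2n)\bigr) \qquad (n \geq 1),
\]
which has already been established in the course of proving Proposition \ref{monotonie}; see (\ref{eq:n1}). No new analytic input is needed, since the corollary is a strictly weaker statement than (\ref{eq:n1}) supplemented by monotonicity in $x$.

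First, I would use the structural fact recalled in Section 3 that $P_n(x) = \frac{x}{n!}\,\widetilde{P}_n(x)$, where $\widetilde{P}_n$ is a monic polynomial of degree $n-1$ with non-negative integer coefficients. Consequently, every coefficient of $P_n(x)$ (as a polynomial in $x$) is non-negative and the leading coefficient $1/n!$ is strictly positive. In particular, for $n \geq 1$ the function $x \mapsto P_n(x)$ is strictly increasing on $[0,\infty)$, so $P_n(x) > P_n(2)$ for every $x > 2$.

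Second, I would combine this monotonicity with (\ref{eq:n1}) to obtain
\[
P_n(x) \;>\; P_n(2) \;\geq\; \tfrac{13}{4}\bigl(1 + \ln(2n)\bigr) \;>\; 1 + \ln(2n),
\]
where the final inequality uses $\tfrac{13}{4} > 1$ together with $1 + \ln(2n) > 0$ for $n \geq 1$. Rearranging gives $P_n(x) - (1+\ln(2n)) > 0$ for all $n \geq 2$ and $x > 2$, which is precisely (\ref{eq:hilfsresultat}).

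There is no genuine obstacle in this plan; the entire content lies in the already-proved bound (\ref{eq:n1}). The only point requiring attention is to confirm that the proof of Proposition \ref{monotonie} really establishes (\ref{eq:n1}) in the full range $n \geq 1$, which it does: the asymptotic regime $n \geq 87$ is handled by the Rademacher--Lehmer estimate (\ref{BO2}) together with $P_n(2) \geq P_n(1) = p(n)$, while the finite range $1 \leq n \leq 86$ is disposed of by the tabulated values $P_1(2),\dots,P_5(2)$ combined with the monotonicity of $n \mapsto P_n(2)$ from Theorem \ref{HNT Theorem}.
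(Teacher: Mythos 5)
Your proof is correct and matches the paper's intent: the paper gives no separate argument, merely noting the corollary is "deduced from the proof of the proposition," and the deduction is exactly what you spell out — the bound $P_n(2)\geq \frac{13}{4}\left(1+\ln\left(2n\right)\right)$ from (\ref{eq:n1}), the monotonicity of $P_n$ on $[0,\infty)$ coming from its non-negative coefficients, and the trivial weakening $\frac{13}{4}>1$. Nothing is missing.
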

Let $n \geq 4$. Then ( \ref{eq:hilfsresultat}) is satisfied for all $x \geq 1$.
\section{General case}
\begin{proof}[Proof of Theorem \ref{Hauptsatz}]
We show (\ref{conjecture}) by induction on $n=a+b$.
For $n=a+b=3$ we have
$P_{1}\left( x\right) P_{2}\left( x\right) -P_{3}\left( x\right) =\frac{x}{3}\left( x^{2}-4\right) >0$
for all $x>2$.

Suppose $n\geq 4$ and
$P_{A}\left( x\right) P_{B}\left( x\right) >P_{A+B}\left( x\right) $
for all $3\leq A+B\leq n-1=a+b-1$ and $x>2$.
Without loss of generality
we assume $a\geq b\geq 2$. (The case $b=1$ was discussed in
Proposition ~\ref{monotonie}.)

We have
$P_{a}\left( 2\right) P_{b}\left( 2\right) \geq P_{a+b}\left( 2\right) $
for $a+b\geq 3$ by Theorem~\ref{Fu} of \cite{CFT18}.
If we can now show that
\begin{equation}
\frac{\mathrm{d}}{\mathrm{d}x}\left( P_{a}\left( x\right) P_{b}\left( x\right) \right) >P_{a+b}^{\prime }\left( x\right) 
\label{eq:ableitung}
\end{equation}
the proof is completed, as this implies
$P_{a}\left( x\right) P_{b}\left( x\right) >P_{a+b}\left( x\right) $
for all $x>2$.

Note that
$P_{A}\left( x\right) P_{0}\left( x\right) =P_{A}\left( x\right) $.
Thus
\begin{eqnarray*}
&&P_{a}^{\prime }\left( x\right) P_{b}\left( x\right) +P_{a}\left( x\right) P_{b}^{\prime }\left( x\right) -P_{a+b}^{\prime }\left( x\right) \\
&=&\sum _{k=1}^{a}\frac{\sigma \left( k\right) }{k}P_{a-k}\left( x\right) P_{b}\left( x\right) +P_{a}\left( x\right) \sum _{k=1}^{b}\frac{\sigma \left( k\right) }{k}P_{b-k}\left( x\right) -\sum _{k=1}^{a+b}\frac{\sigma \left( k\right) }{k}P_{a+b-k}\left( x\right) \\
&>&\sum _{k=1}^{a}\frac{\sigma \left( k\right) }{k}P_{a+b-k}\left( x\right)+\sum _{k=1}^{b}\frac{\sigma \left( k\right) }{k}P_{a+b-k}\left( x\right) -\sum _{k=1}^{a+b}\frac{\sigma \left( k\right) }{k}P_{a+b-k}\left( x\right) \\
&=&\sum _{k=1}^{b}\frac{\sigma \left( k\right) }{k}P_{a+b-k}\left( x\right) -\frac{\sigma \left( k+a\right) }{k+a}P_{b-k}\left( x\right) \\
&\geq &\sum _{k=1}^{b}P_{a+b-k}\left( x\right) -\left( 1+\ln \left( 2a\right) \right) P_{b-k}\left( x\right) .
\end{eqnarray*}
We now consider
\begin{equation}
P_{a+b-k}\left( x\right) -\left( 1+\ln \left( 2a\right) \right) P_{b-k}\left( x\right)
\label{eq:summand}
\end{equation}
for each $k$ separately.
From Theorem~\ref{HNT Theorem} we know that
$P_{a+b-k}\left( x\right) $ increases faster than
$P_{b-k}\left( x\right) $ for $x\geq 1$.
Hence to show that (\ref{eq:summand}) is positive it is
enough to show this for $1\leq x\leq 2$.

Using (\ref{BO2})
for $k<b$
\begin{eqnarray*}
&&P_{a+b-k}\left( 1\right) -\left( 1+\ln \left( 2a\right) \right) P_{b-k}\left( 1\right) \\
&>&\frac{\sqrt{3}}{12\left( a+b-k\right) }\left( 1-\frac{1}{\sqrt{a+b-k}}\right) \mathrm{e}^{\frac{\pi }{6}\sqrt{24\left( a+b-k\right) -1}} \\
&&{}-\left( 1+\ln \left( 2a\right) \right) \frac{\sqrt{3}}{12\left( b-k\right) }\left( 1+\frac{1}{\sqrt{b-k}}\right) \mathrm{e}^{\frac{\pi }{6}\sqrt{24\left( b-k\right) -1}}
\end{eqnarray*}
and the last is positive if and only if
\begin{equation}
\mathrm{e}^{\frac{\pi }{6}\left( \sqrt{24\left( a+b-k\right) -1}-\sqrt{24\left( b-k\right) -1}\right) }
>\left( 1+\ln \left( 2a\right) \right) \frac{a+b-k}{b-k}\frac{1+\frac{1}{\sqrt{b-k}}}{1-\frac{1}{\sqrt{a+b-k}}}.
\label{eq:zuzeigen}
\end{equation}
Now
\begin{eqnarray*}
&&\sqrt{24\left( a+b-k\right) -1}-\sqrt{24\left( b-k\right) -1} \\
&=&\frac{24a}{\sqrt{24\left( a+b-k\right) -1}+\sqrt{24\left( b-k\right) -1}} \\
&\geq &\frac{24a}{\sqrt{24\left( 2a-1\right) -1}+\sqrt{24\left( a-1\right) -1}} \\
&>&\frac{24a}{7\sqrt{a}+5\sqrt{a}}=2\sqrt{a}.
\end{eqnarray*}
On the other hand
\[
\left( 1+\ln \left( 2a\right) \right) \frac{a+b-k}{b-k}\frac{1+\frac{1}{\sqrt{b-k}}}{1-\frac{1}{\sqrt{a+b-k}}}
<\left( 1+\ln \left( 2a\right) \right) \left( 1+a\right) \frac{2}{1-\frac{1}{\sqrt{a}}}.
\]
If $a$ fulfills
\begin{equation}
\mathrm{e}^{\pi \sqrt{a}/3}>\left( 1+\ln \left( 2a\right) \right) \left( 1+a\right) \frac{2}{1-\frac{1}{\sqrt{a}}}
\label{eq:wachstum}
\end{equation}
then $a$ also fulfills (\ref{eq:zuzeigen}).
It is not difficult to
show that (\ref{eq:wachstum}) is fulfilled
for $a\geq 34$.
This implies (\ref{eq:summand}) for $x=1$.
Theorem~\ref{HNT Theorem} implies (\ref{eq:summand})
for $x\geq 1$.

We used PARI/GP to check that equation (\ref{eq:summand})
with $x=2$ is positive
for $1\leq k<b\leq a\leq 33$.
Again Theorem~\ref{HNT Theorem}
implies (\ref{eq:summand}) for $x\geq 2$.

What remains to be considered
is (\ref{eq:summand}) for $k=b$.
This follows from
Corollary~\ref{hilfsresultat}
setting $m=a$.

Thus, for all $a\geq b\geq k\geq 1$
the value of (\ref{eq:summand}) is positive. Hence  also
\[
P_{a}^{\prime }\left( x\right) P_{b}\left( x\right) +P_{a}\left( x\right) P_{b}^{\prime }\left( x\right) >P_{a+b}^{\prime} \left( x\right) 
\]
for $x>2$. This completes the induction step,
as explained  in the  beginning of the proof.
\end{proof}

In the following we give provide a concise proof
of (\ref{eq:wachstum}) for
$a\geq 85$. 
All that remains is to check that (\ref{eq:wachstum})
is fulfilled for $a\leq 84$. The sharp
estimate $a\geq 34$ would require more
work.

\begin{proof}[Analytic proof of (\ref{eq:wachstum}) for $a\geq 85$]
For $a>0$
\[
\mathrm{e}^{\pi \sqrt{a}/3}>\frac{1}{6!}\left( \frac{\pi }{3}\sqrt{a}\right) ^{6}=\frac{1}{6!}\left( \frac{\pi }{3}\right) ^{6}a^{3}.
\]
Suppose $a\geq 81$ then
\[
\left( 1+\ln \left( 2a\right) \right) \left( 1+a\right) \frac{2}{1-\frac{1}{\sqrt{a}}}\leq \left( \frac{a}{81}+\ln \left( 162\right) \right) \left( 1+a\right) \frac{18}{8}.
\]
Thus (\ref{eq:wachstum}) is fulfilled if
\[
\frac{1}{6!}\left( \frac{\pi }{3}\right) ^{6}a\geq \left( \frac{1}{81}+\frac{\ln \left( 162\right) }{81}\right) \left( \frac{1}{81}+1\right) \frac{18}{8}\geq\left( \frac{1}{81}+\frac{\ln \left( 162\right) }{a}\right) \left( \frac{1}{a}+1\right) \frac{9}{4}
\]
and this is fulfilled for
$a\geq 85>\left( \left( \frac{1}{81}+\frac{\ln \left( 162\right) }{81}\right) \left( \frac{1}{81}+1\right) \frac{9}{4}\right) /\left( \frac{1}{6!}\left( \frac{\pi }{3}\right) ^{6}\right) $.
\end{proof}
{\bf Acknowledgments.} 
We thank the RWTH Aachen University
and the Graduate School: Experimental and constructive algebra
for their excellent working environment.

\end{document}